\documentclass[11pt]{article}
\usepackage{amssymb,amsmath,mathrsfs,amsthm,indentfirst} 
\usepackage[numbers,sort&compress]{natbib}
\textwidth=16truecm
\textheight=21truecm
\hoffset=-2truecm

\DeclareMathOperator\dif{d\!} 

\begin{document}

\title{
\bf Analysis of a nonlinear free-boundary tumor model with angiogenesis and a connection between the nonnecrotic and necrotic phases}

\author{Huijuan Song, Wentao Hu, Zejia Wang\thanks{Corresponding author.
Email: zejiawang@jxnu.edu.cn}
\\
{\small \it School of Mathematics and Statistics, Jiangxi Normal University, }
\\
{\small \it Nanchang, Jiangxi 330022, PR China}
}

\date{}

\maketitle

\begin{abstract}
This paper is concerned with a nonlinear free boundary problem modeling the growth of spherically symmetric tumors with angiogenesis, set with a Robin boundary condition. In which, both nonnecrotic tumors and necrotic tumors are taken into consideration. The well-posedness and asymptotic behavior of solutions are studied. It is shown that there exist two thresholds, denoted by $\tilde\sigma$ and $\sigma^*$, on the surrounding nutrient concentration $\bar\sigma$. If $\bar\sigma\leq\tilde\sigma$, then the considered problem admits no stationary solution and all evolutionary tumors will finally vanish, while if $\bar\sigma>\tilde\sigma$, then it admits a unique stationary solution and all evolutionary tumors will converge to this dormant tumor; moreover, the dormant tumor is nonnecrotic if $\tilde\sigma<\bar\sigma\leq\sigma^*$ and necrotic if $\bar\sigma>\sigma^*$.
The connection and mutual transition between the nonnecrotic and necrotic phases are also given.
\\
{\bf Keywords:}
\quad Free boundary problem; nonnecrotic and necrotic tumors; angiogenesis; stability; connection
\\
{2010MSC:} 35R35, 35B35, 35Q92
\end{abstract}

\let\oldsection\section
\def\SEC{\oldsection}
\renewcommand\section{\setcounter{equation}{0}\SEC}
\renewcommand\thesection{\arabic{section}}

\renewcommand\theequation{\thesection.\arabic{equation}}
\newtheorem{proposition}{Proposition}[section]
\newtheorem{lemma}{Lemma}[section]
\newtheorem{theorem}{Theorem}[section]
\newtheorem{remark}{Remark}[section]
\newtheorem{corollary}{Corollary}[section]
\def\pd#1#2{\dfrac{\partial#1}{\partial#2}}
\allowdisplaybreaks
\renewcommand{\proofname}{\indent\it\bfseries Proof}

\section{Introduction}

In this paper, we study the following nonlinear free boundary problem modeling the growth of spherically symmetric tumors with angiogenesis:
\begin{align}
    &\frac1{r^2}\frac{\partial}{\partial r}\left(r^2\frac{\partial\sigma}{\partial r}\right)= f(\sigma)H(\sigma-\sigma_D)\quad\text{for}~0<r<R(t),~t>0,
    \label{eq(1.1)}
    \\
    &\frac{\partial\sigma}{\partial r}(0,t)=0\quad\text{for}~t>0,
    \label{eq(1.2)}
    \\
    &\frac{\partial\sigma}{\partial r}(R(t),t)+\beta[\sigma(R(t),t)-\bar\sigma]=0\quad\text{for}~t>0,
    \label{eq(1.3)}
    \\
    &R^2(t)\frac{\dif R}{\dif t}=\int_{\sigma>\sigma_D}g(\sigma)r^2\dif r-\int_{\sigma\leq\sigma_D}\nu r^2\dif r\quad\text{for}~t>0,
    \label{eq(1.4)}
    \\
    &R(0)=R_0,
    \label{eq(1.5)}
\end{align}
where $\sigma(r,t)$ is the nutrient concentration in the tumor, $r=|x|$, $x\in\mathbb{R}^3$, and $R(t)$ is the radius of the tumor at time $t$, which are both unknown and need to be determined together, $H(s)$ is the Heaviside function: $H(s)=0$ if $s\leq0$ and $H(s)=1$ if $s>0$, $\sigma_D$ is a positive constant representing the threshold value such that in the region
where $\sigma>\sigma_D$ nutrient is enough to sustain (at least a portion of) tumor cells alive
and proliferating, whereas in the region where $\sigma\leq\sigma_D$ (which may be empty) nutrient is insufficient to
sustain any tumor cell alive, $f(\sigma)$ and $g(\sigma)$ are given nutrient consumption rate and tumor cell proliferation rate functions, respectively, $\bar\sigma$, $\beta$ and $\nu$ are positive constants representing the nutrient concentration outside the tumor, the rate of nutrient supply to the tumor, and the dissolution rate of necrotic cells, respectively, and $R_0>0$ is a given initial tumor radius.

Angiogenesis is a complex process in which tumor cells secrete substances that promote the formation of new blood vessels penetrating into the tumor. In this model, the nutrient enters the tumor through these new blood vessels and the impact from angiogenesis is incorporated in the boundary condition \eqref{eq(1.3)}, where the positive constant $\beta$
reflects the strength of the blood vessel system of the tumor; the smaller $\beta$ is, the weaker the blood vessel system of the tumor will be;
$\beta=0$ means that the tumor does not have its own blood vessel system and $\beta=\infty$ indicates that the tumor
is all surrounded by the blood vessels which reduces to the Dirichlet boundary condition
\begin{equation}
\label{eq(1.7)}
\sigma(R(t),t)=\bar\sigma\quad\text{for}~t>0.
\end{equation}
Thus, from biological viewpoint, the boundary condition \eqref{eq(1.3)} is more realistic compared with \eqref{eq(1.7)}. For more discussions please see \cite{F-L(15),Z-C(18), H-Z-H(20)}.

Before going to our interest, we prefer to recall some relevant works. The above model with linear consumption and proliferation rates:
\begin{equation}
\label{eq(1.6)}
f(\sigma)=\sigma,\quad g(\sigma)=\mu(\sigma-\tilde\sigma)\quad(\mu>0,~\tilde\sigma>0)
\end{equation}
and the Dirichlet boundary condition \eqref{eq(1.7)}, was proposed by Cui \cite{Cui(06)} as in essence a combination of two Byrne-Chaplain inhibitor-free and avascular tumor models; see \cite{B-C(95)} for the nonnecrotic case and then \cite{B-C(96)} for its necrotic version.
This is made such that both nonnecrotic tumors and necrotic tumors can be considered in a joint way. By delicate calculations based on the existence of an explicit form for solutions of \eqref{eq(1.1)},
\eqref{eq(1.2)}, \eqref{eq(1.7)}, \eqref{eq(1.6)} for given $R(t)$, Cui \cite{Cui(06)}
studied the existence, uniqueness and global asymptotic stability of stationary solutions, the dependence on the parameters $\nu$ and $\bar\sigma$, as well as the mutual transition between the nonnecrotic and necrotic phases. Recently, Wu and Wang \cite{W-W(19)} extended the results to the case of general nonlinear functions $f$ and $g$. Since no explicit solution is available, they investigated much more profound relations between all unknown functions. For a broader discussion on the proliferation rate in the nonnecrotic case, we refer the reader to \cite{B-E-Z(08)}. More recently, Wu and Xu \cite{W-X(20)} further established similar results for the nonlinear model with a periodic external nutrient supply, that is
$$
\sigma(R(t),t)=\phi(t)\quad\text{for}~t>0,
$$
where $\phi(t)$ is a positive periodic function. For the analysis of other related tumor models, see \cite{B-F(05),F-MB(05),Z-X(14)} for example.
When replacing the Dirichlet boundary condition \eqref{eq(1.7)} with the Robin condition \eqref{eq(1.3)}, the model \eqref{eq(1.1)}-\eqref{eq(1.5)} with linear functions \eqref{eq(1.6)} was recently studied by Xu and Su \cite{X-S(20)}. As for nonlinear functions $f$ and $g$, the nonnecrotic case
was analyzed by Zhuang and Cui \cite{Z-C(18)}.

Motivated by the above works, in this paper we aim at studying the problem \eqref{eq(1.1)}-\eqref{eq(1.5)} with nonlinear $f$, $g$
and the Robin boundary condition, and revealing the dependence of solutions on the parameter $\bar\sigma$. It should be pointed out that $\bar\sigma$ rather than other model parameters is chosen as the dependent parameter, because we think it is extrinsic for tumors. As in \cite{W-W(19)} we assume

(A1) $f \in C^1[0,+\infty)$, $f'$ is positive and bounded on $[0,+\infty)$, and $f(0)=0$;

(A2) $g \in C^1[0,+\infty)$, $g'\geq0$ on $[0,+\infty)$ and does not identically equal zero in any interval, and $g(\tilde\sigma)=0$ for some $\tilde\sigma>0$;

(A3) $\sigma_D<\min\{\tilde\sigma,\bar\sigma\}$ and $g(\sigma_D)+\nu \geq 0$.\\
Here the relation $g(\sigma_D)+\nu \geq 0$ means that the volume loss rate of living cells at $\sigma_D$ is not greater than the dissolution rate of necrotic cells (see \cite{Cui(06)} for the detailed derivation).

Based on the above assumptions, it is shown that there exist two thresholds on $\bar\sigma$: $\tilde\sigma$ and $\sigma^*$ (see \eqref{eq(2.41)}). Indeed, if $\bar\sigma\leq\tilde\sigma$, then the problem \eqref{eq(1.1)}-\eqref{eq(1.5)} admits no stationary solution and all evolutionary tumors will finally vanish; if $\bar\sigma>\tilde\sigma$, then \eqref{eq(1.1)}-\eqref{eq(1.5)} admits a unique stationary solution and all evolutionary tumors will converge to this dormant tumor; moreover, the dormant tumor is nonnecrotic if $\tilde\sigma<\bar\sigma\leq\sigma^*$ and necrotic if $\bar\sigma>\sigma^*$ (see Theorems \ref{thm-1} and \ref{thm-2}). Besides, it is found that mutual transition may exist between the nonnecrotic and necrotic phases in the growth of tumors (see Theorem \ref{thm-3}). The biological implication of these results is that the existence, structure (nonnecrotic or necrotic) and stability of dormant tumor state
can be controlled by external nutrient concentration.

For the nonlinear problem, the analysis of stationary solutions was usually made by first transforming the free boundary problem into an equivalent problem with fixed boundary and then investigating this fixed boundary problem. However, some new difficulties being different from those encountered in the Dirichlet boundary condition problem arise when we tackle the Robin boundary condition problem. For example, the maximum principle may not work sometimes. Inspired by \cite{Z-C(18)}, we solve it by employing the relationship between the transformed function and its original function, and applying the maximum principle to an auxiliary problem; see the proof of \eqref{eq(2.00)}.

The rest of the paper is organized as follows. In Section 2, we establish the existence and uniqueness of stationary solutions to the problem \eqref{eq(1.1)}-\eqref{eq(1.5)}. In Section 3, we prove the global existence and asymptotic behavior of transient solutions, and present the connection and mutual transition between the nonnecrotic and necrotic phases.

\section{Stationary solutions}

In this section, we study stationary solutions to the system \eqref{eq(1.1)}-\eqref{eq(1.5)}, denoted by $(\sigma_s(r),R_s)$, which satisfy
\begin{align}
    &\sigma''(r)+\frac{2}{r}\sigma'(r)= f(\sigma)H(\sigma-\sigma_D)\quad\text{for}~0<r<R,
    \label{eq-1}
    \\
    &\sigma'(0)=0,
    \label{eq-2}
    \\
    &\sigma'(R)+\beta(\sigma(R)-\bar\sigma)=0,
    \label{eq-3}
    \\
    &\frac1{R^3}\left[\int_{\sigma>\sigma_D}g(\sigma(r))r^2\dif r-\int_{\sigma\leq\sigma_D}\nu r^2\dif r\right]=0.
    \label{eq-4}
\end{align}
To be more specific, if the dormant tumor has a necrotic core with radius $\rho\in(0,R)$, then the problem \eqref{eq-1}-\eqref{eq-4} becomes
\begin{align}
    &\sigma''(r)+\frac{2}{r}\sigma'(r)=f(\sigma) \quad\text{for}~\rho<r<R,
    \label{eq(2.1)}
    \\
    &\sigma'(\rho)=0,\quad\sigma'(R)+\beta(\sigma(R)-\bar\sigma)=0,
     \label{eq(2.2)}
    \\
    &\sigma(r)=\sigma_D \quad \text{for}~0\leq r\leq\rho,
    \label{eq(2.3)}
    \\
    &\frac1{R^3}\left[\int_\rho^R g(\sigma(r))r^2\dif r-\frac{\nu}{3}\rho^3\right]=0,
     \label{eq(2.4)}
\end{align}
whereas if the dormant tumor does not have a necrotic core, then \eqref{eq-1}-\eqref{eq-4} reduces to
\begin{align}
    &\sigma''(r)+\frac{2}{r}\sigma'(r)=f(\sigma) \quad\text{for}~0<r<R,
    \label{eq(2.03)}
    \\
    &\sigma'(0)=0,\quad \sigma(0)\geq\sigma_D,
      \label{eq(2.04)}
    \\
    &\sigma'(R)+\beta(\sigma(R)-\bar\sigma)=0,
      \label{eq(2.05)}
    \\
    &\frac1{R^3}\int_0^R g(\sigma(r))r^2\dif r=0.
      \label{eq(2.06)}
\end{align}
For any given $R>0$, setting $s=\frac{r}{R}$, $\eta=\frac{\rho}{R}$ and $u(s)=\sigma(r)$, \eqref{eq(2.1)} and \eqref{eq(2.2)} are transformed into
\begin{align}
    &u''(s)+\frac{2}{s}u'(s)=R^2f(u) \quad \text{for}~\eta<s<1,
    \label{eq(2.5)}
    \\
    &u'(\eta)=0,\quad u'(1)+\beta R(u(1)-\bar\sigma)=0.
    \label{eq(2.6)}
\end{align}

\begin{lemma}
\label{lem-2.1}
Let the assumptions (A1) and (A3) hold. Then for any $R>0$ and any $0\leq\eta<1$, the problem \eqref{eq(2.5)}, \eqref{eq(2.6)} allows a unique solution $u(s)=U(s,\eta,R)$. Moreover,

(i) for $\eta\leq s\leq 1$,
\begin{equation}
\label{eq(2.0)}
0<U(s,\eta,R)<\bar\sigma;
\end{equation}

(ii) for $\eta<s\le1$,
\begin{equation}
    \label{eq(2.7)}
    0<\frac{\partial U}{\partial s}(s,\eta,R)\leq\frac{s R^2 f(U(s,\eta,R))}{3}
\end{equation}
and
\begin{equation}
    \label{eq(2.8)}
    \frac{1}{s}\frac{\partial U}{\partial s}(s,\eta,R)\leq\frac{\partial^2U}{\partial s^2}(s,\eta,R);
\end{equation}

(iii) $U(s,\eta,R)$ is strictly decreasing in $R$ for $\eta\leq s\leq1$, and
\begin{equation}
    \label{eq(2.9)}
    \lim_{R\to0^+}U(s,\eta,R)=\bar\sigma
\end{equation}
uniformly with respect to $s$ on $[\eta,1]$,
\begin{equation}
    \label{eq(2.10)}
    \lim_{R\to+\infty}U(s,\eta,R)=0
\end{equation}
uniformly with respect to $s$ on each closed interval contained in $[\eta,1)$.
\end{lemma}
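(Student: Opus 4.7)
The plan is to address the five conclusions in order, working with the integrated form of \eqref{eq(2.5)} obtained from the Neumann condition at $s=\eta$,
\begin{equation*}
s^{2} u'(s) = R^{2}\!\int_{\eta}^{s} t^{2} f(u(t))\dif t
\end{equation*}
(primes throughout denote differentiation in $s$), together with maximum-principle arguments adapted to the Robin condition at $s=1$. Existence and uniqueness I would obtain by shooting in the parameter $c:=u(\eta)$: by (A1) the initial-value problem with $u(\eta)=c$, $u'(\eta)=0$ is uniquely solvable and smoothly dependent on $c$, and differentiating in $c$ gives a linear equation whose positive coefficient $R^{2}f'(u)$ forces the sensitivity to remain strictly positive; consequently the shooting functional $c\mapsto \phi'(1;c)+\beta R(\phi(1;c)-\bar\sigma)$ is strictly increasing in $c$. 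It is negative at $c=0$, where $\phi\equiv 0$ because $f(0)=0$, and positive at $c=\bar\sigma$, where the equation forces $\phi>\bar\sigma$ on $(\eta,1]$; hence a unique root $c_{*}\in(0,\bar\sigma)$ exists.

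The two-sided bound \eqref{eq(2.0)} then follows: from $U(\eta)=c_{*}>0$ and $f(U)>0$ on any subinterval where $U>0$, the integral identity yields $U'\ge 0$, which rules out a first zero of $U$ past $\eta$ and hence gives $U\ge c_{*}>0$; combining $U'(1)\ge 0$ with the rewritten Robin identity $U'(1)=\beta R(\bar\sigma-U(1))$ gives $U(1)\le\bar\sigma$, and the strict inequality at $s=1$ follows by applying the strong maximum principle and the Hopf lemma to the superharmonic auxiliary function $\bar\sigma-U$, whose source $f(U)$ has a definite sign. This is the step where the Robin boundary condition creates the nonstandard difficulty mentioned in the introduction. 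Part (ii) is then quick: positivity of $U'$ on $(\eta,1]$ is immediate from the integral identity since $f(U)>0$ by (i); the upper bound \eqref{eq(2.7)} follows from the monotonicity of $U$ in $s$ (so that $f(U(t))\le f(U(s))$ for $t\le s$) combined with $\int_{\eta}^{s}t^{2}\dif t\le s^{3}/3$; and \eqref{eq(2.8)} is an immediate rearrangement of \eqref{eq(2.5)}, since $U''=R^{2}f(U)-(2/s)U'$ and $R^{2}f(U)\ge 3U'/s$ by \eqref{eq(2.7)}.

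I expect the strict monotonicity in $R$ in (iii) to be the main obstacle. Setting $V:=\partial_{R}U$ and differentiating \eqref{eq(2.5)}--\eqref{eq(2.6)} produces
\begin{equation*}
V''+\tfrac{2}{s}V'-R^{2}f'(U)\,V = 2Rf(U)>0,\quad V'(\eta)=0,\quad V'(1)+\beta R\,V(1)=U'(1)/R>0.
\end{equation*}
The favourable sign of the zeroth-order coefficient rules out a positive interior maximum of $V$ by the weak maximum principle, and a positive maximum at $s=\eta$ is excluded by the Hopf lemma combined with $V'(\eta)=0$. To close the remaining case of a positive maximum at $s=1$, I would integrate the rewritten equation $(s^{2}V')'=2Rs^{2}f(U)+R^{2}s^{2}f'(U)V$ over $[\eta,1]$ and substitute both $U'(1)=R^{2}\!\int_{\eta}^{1}s^{2}f(U)\dif s$ and the Robin condition for $V'(1)$, producing the identity
\begin{equation*}
\frac{U'(1)}{R}+\beta R\,V(1)+R^{2}\!\int_{\eta}^{1} s^{2}f'(U)\,V\dif s = 0.
\end{equation*}
Combined with the MP-derived structure of the positivity set of $V$ (confined to a single interval $(a,1]$ with $V(a)=0$), this identity forces $V<0$ throughout $[\eta,1]$, yielding strict decrease of $U$ in $R$. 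The limits \eqref{eq(2.9)}--\eqref{eq(2.10)} then follow from this monotonicity and the uniform bound $0<U<\bar\sigma$: pointwise monotone limits $U_{0}$ and $U_{\infty}$ exist, and passage to the limit in the integral and Robin identities identifies them as $\bar\sigma$ (on all of $[\eta,1]$ as $R\to 0^{+}$) and $0$ (on any closed subinterval of $[\eta,1)$ as $R\to\infty$), with Dini's theorem upgrading monotone pointwise convergence to the claimed uniform convergence.
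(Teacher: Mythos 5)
Your shooting construction for existence and uniqueness, your arguments for (i), (ii), and your derivation of the linearized problem are all sound (the paper instead uses the sub/supersolution pair $0$ and $\bar\sigma$ plus the maximum principle, but your route is a legitimate alternative; note only that the ``nonstandard Robin difficulty'' you attribute to the bound $U(1)<\bar\sigma$ actually lives elsewhere). The genuine gap is in the strict monotonicity in $R$, which is exactly the step the paper singles out as the main obstacle. Your linearization is correct: $V=\partial_R U$ satisfies $V''+\tfrac2sV'-R^2f'(U)V=2Rf(U)>0$, $V'(\eta)=0$, and $V'(1)+\beta R V(1)=U'(1)/R>0$. The maximum principle and Hopf's lemma do exclude a positive maximum in the interior and at $s=\eta$, but they cannot exclude one at $s=1$: there Hopf gives $V'(1)>0$, which is perfectly compatible with the \emph{positive} Robin datum $U'(1)/R$. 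Your proposed patch does not close this case. The identity $\tfrac{U'(1)}{R}+\beta R V(1)+R^2\int_\eta^1 s^2 f'(U)V\dif s=0$ is correctly derived, but if $V>0$ exactly on $(a,1]$ and $V\le0$ on $[\eta,a]$, it merely asserts that $R^2\int_\eta^a s^2f'(U)V\dif s$ equals the negative of a positive quantity --- which is entirely consistent with $V\le0$ there. Since $LV>0$ controls maxima of $V$ but puts no lower bound on $V$ on $[\eta,a]$, nothing prevents the negative part of the integral from balancing the positive terms, and no contradiction results. So $V<0$ is not proved.

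The paper circumvents this by going back to the original variables: writing $U(s,\eta,R)=\sigma(sR,\eta R,R)$ gives $\partial_R U=s\sigma_r+\eta\sigma_\rho+\sigma_R$, and the three derivative problems \eqref{eq(2.17)}--\eqref{eq(2.19)} have Robin data that cancel upon summation, so $\Sigma=\sigma_r+\sigma_\rho+\sigma_R$ solves a problem with \emph{homogeneous} Neumann and Robin conditions and positive source $\tfrac{2}{r^2}\sigma'$; the maximum principle then applies cleanly to give $\Sigma<0$, and together with $\sigma_r>0$, $\sigma_\rho>0$, $s\le1$, $\eta<1$ this yields $\partial_R U<0$. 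You would need this device (or another genuinely new idea) to repair part (iii); as written, your proofs of \eqref{eq(2.9)} and \eqref{eq(2.10)} also lean on the monotone-in-$R$ limits, so the gap propagates to them, although \eqref{eq(2.9)} at least can be recovered independently from the quantitative estimate that follows from the integral representation of $\bar\sigma-U$.
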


\begin{proof}
Using the assumption (A1), it is easy to see that for any $R>0$ and any $0\leq\eta<1$, the constant functions $\underline{u}(s)\equiv0$ and $\bar{u}(s)\equiv\bar\sigma$ are sub- and supersolutions of the problem \eqref{eq(2.5)}, \eqref{eq(2.6)}, respectively. Thus, by the method of sub- and supersolutions, there exists at least one solution $u(s)=U(s,\eta,R)$ to this problem satisfying
\begin{equation}
    \label{eq(2.11)}
    0\leq U(s,\eta,R)\leq\bar{\sigma} \quad \text{for~all}~\eta\leq s\leq1,
\end{equation}
while the uniqueness follows from the maximum principle. Moreover, the uniqueness of solutions to the initial value problem implies that
\begin{equation}
    \label{eq(2.12)}
U(\eta,\eta,R)>0.
\end{equation}
Integrating \eqref{eq(2.5)} yields
\begin{equation}
    \label{eq(2.13)}
    \frac{\partial U}{\partial s}(s,\eta,R) = \frac{R^2}{s^2} \int_\eta^s l^2 f(U(l,\eta,R)) \dif l \quad \text{for}~\eta< s\leq 1,
\end{equation}
which together with \eqref{eq(2.11)} and the hypothesis (A1) gives
\begin{equation}
 \label{eq(2.14)}
\frac{\partial U}{\partial s}(s,\eta,R)\ge0\quad \text{for}~\eta< s\leq 1.
\end{equation}
\eqref{eq(2.7)} is then an immediate consequence of \eqref{eq(2.12)}-\eqref{eq(2.14)} and (A1). In particular, $\frac{\partial U}{\partial s}(1,\eta,R)>0$.
In view of the second boundary condition in \eqref{eq(2.6)}, we have $U(1,\eta,R)<\bar\sigma$; hence, the assertion (i) is proved.
The relation \eqref{eq(2.8)} can be easily derived from \eqref{eq(2.5)} and \eqref{eq(2.7)}.
Using \eqref{eq(2.13)} and L'Hospital' rule, we further compute
\begin{equation}
 \label{eq(2.15)}
\begin{split}
 \frac{\partial^2 U}{\partial s^2}(\eta,\eta,R)&=\lim_{s\to\eta^+}\frac{\frac{\partial U}{\partial s}(s,\eta,R)}{s-\eta}
 =\lim_{s\to\eta^+}\frac{R^2\int_\eta^s l^2 f(U(l,\eta,R)) \dif l}{s^2(s-\eta)}
 \\
 &=\begin{cases}
 \frac{R^2}3 f(U(\eta,\eta,R))\quad {\rm if}~\eta=0,\\
R^2 f(U(\eta,\eta,R))\quad {\rm if}~0<\eta<1.
 \end{cases}
 \end{split}
\end{equation}

We now show that
\begin{equation}
\label{eq(2.00)}
    \frac{\partial U}{\partial R}(s,\eta,R)<0 \quad \text{for~every}~\eta\le s\le1.
\end{equation}
First, we assume $\eta\in(0,1)$. Then $U(s,\eta,R)=\sigma(r,\rho,R)$ with $r=sR$, $\rho=\eta R$, and
\begin{equation}
\label{eq(2.16)}
\frac{\partial U}{\partial R}(s,\eta,R)=s\frac{\partial\sigma}{\partial r}(r,\rho,R)+\eta\frac{\partial\sigma}{\partial\rho}(r,\rho,R)+\frac{\partial\sigma}{\partial R}(r,\rho,R).
\end{equation}
In what follows, for notational simplicity we also denote $\frac{\partial z}{\partial r}(r,\rho,R)=z_r(r,\rho,R)=z'(r,\rho,R)$, $\frac{\partial^2z}{\partial r^2}(r,\rho,R)=z_{rr}(r,\rho,R)=z''(r,\rho,R)$ for a function of three variables $z(r,\rho,R)$. A simple calculation based on \eqref{eq(2.1)},
\eqref{eq(2.2)} yields
\begin{equation}
    \label{eq(2.17)}
    \begin{cases}
    &\sigma''_R(r,\rho,R)+\frac2r\sigma'_R(r,\rho,R)=f'(\sigma(r,\rho,R))\sigma_R(r,\rho,R) \quad \text{for}~\rho<r<R, \\
    &\sigma'_R(\rho,\rho,R)=0, \\
    &\sigma'_R(R,\rho,R)+\beta\sigma_R(R,\rho,R)=-\sigma''(R,\rho,R)-\beta\sigma'(R,\rho,R),
    \end{cases}
\end{equation}
\begin{equation}
    \label{eq(2.18)}
    \begin{cases}
    &\sigma''_\rho(r,\rho,R)+\frac2r\sigma'_\rho(r,\rho,R)=f'(\sigma(r,\rho,R)) \sigma_\rho(r,\rho,R) \quad \text{for}~\rho<r<R, \\
    &\sigma'_\rho(\rho,\rho,R)=-\sigma''(\rho,\rho,R), \\
    &\sigma'_\rho(R,\rho,R)+\beta\sigma_\rho(R,\rho,R)=0
    \end{cases}
\end{equation}
and
\begin{equation}
    \label{eq(2.19)}
    \begin{cases}
    &\sigma''_r(r,\rho,R)+\frac2r\sigma'_r(r,\rho,R)=f'(\sigma(r,\rho,R)) \sigma_r(r,\rho,R)+\frac2{r^2}\sigma'(r,\rho,R) \quad \text{for}~\rho<r<R, \\
    &\sigma'_r(\rho,\rho,R)=\sigma''(\rho,\rho,R), \\
    &\sigma'_r(R,\rho,R)+\beta\sigma_r(R,\rho,R)=\sigma''(R,\rho,R)+\beta\sigma'(R,\rho,R).
    \end{cases}
\end{equation}
Since it follows from \eqref{eq(2.7)}, \eqref{eq(2.8)} and \eqref{eq(2.15)} that
\begin{equation}
\label{eq(2.20)}
\sigma'(r,\rho,R)=\frac1{R}\frac{\partial U}{\partial s}(s,\eta,R)>0\quad\text{for}~\rho<r\le R,
\end{equation}
$$
\sigma''(r,\rho,R)=\frac1{R^2}\frac{\partial^2 U}{\partial s^2}(s,\eta,R)>0\quad\text{for}~\rho\le r\le R,
$$
applying the strong maximum principle we obtain
\begin{equation}
    \label{eq(2.21)}
\sigma_R(r,\rho,R)<0,\quad \sigma_\rho(r,\rho,R)>0\quad\text{for}~\rho\le r\le R.
\end{equation}
Let
$$
\Sigma(r,\rho,R)=\sigma_r(r,\rho,R)+\sigma_\rho(r,\rho,R)+\sigma_R(r,\rho,R).
$$
Then we deduce from \eqref{eq(2.17)}-\eqref{eq(2.19)} that
\begin{equation}
    \label{eq(2.22)}
    \begin{cases}
    &\Sigma''(r,\rho,R)+\frac2r\Sigma'(r,\rho,R)=f'(\sigma(r,\rho,R)) \Sigma(r,\rho,R)+\frac2{r^2}\sigma'(r,\rho,R) \quad \text{for}~\rho<r<R, \\
    &\Sigma'(\rho,\rho,R)=0, \\
    &\Sigma'(R,\rho,R)+\beta\Sigma(R,\rho,R)=0.
    \end{cases}
\end{equation}
Using the strong maximum principle again, we get
\begin{equation}
\label{eq(2.23)}
    \Sigma(r,\rho,R)<0 \quad \text{for}~ \rho \leq r \leq R.
\end{equation}
Combining \eqref{eq(2.16)}, \eqref{eq(2.20)}, \eqref{eq(2.21)} and \eqref{eq(2.23)}, we conclude \eqref{eq(2.00)}. Next, if $\eta=0$, then $U(s,0,R)=\sigma(r,0,R)$ and \eqref{eq(2.00)} can be verified in a similar manner.

It remains to prove \eqref{eq(2.9)} and \eqref{eq(2.10)}. By \eqref{eq(2.13)} and the second boundary condition in \eqref{eq(2.6)},
we have for any $s\in[\eta,1]$,
\begin{equation}
    \label{eq(2.24)}
\bar\sigma-U(s,\eta,R)
=\frac{1}{\beta R}\frac{\partial U}{\partial s}(1,\eta,R)+R^2\int_s^1\frac1{\tau^2}\int_\eta^\tau l^2 f(U(l,\eta,R)) \dif l \dif\tau,
\end{equation}
which combined with \eqref{eq(2.0)} and \eqref{eq(2.7)} gives
\begin{equation}
    \label{eq(2.25)}
\max_{s\in[\eta,1]}|U(s,\eta,R)-\bar\sigma|
\leq \frac{R}{3\beta}f(\bar\sigma)+R^2\int_{\eta}^1\frac1{\tau^2}\int_\eta^\tau l^2 f(\bar\sigma) \dif l \dif\tau.
\end{equation}
Hence our sending $R\to0^+$ in \eqref{eq(2.25)} yields \eqref{eq(2.9)}. On the other hand, it follows from \eqref{eq(2.24)} that
\begin{equation*}
0\leq\int_{\eta}^1\frac1{\tau^2}\int_\eta^\tau l^2 f(U(l,\eta,R)) \dif l \dif\tau\leq\frac{\bar\sigma}{R^2}.
\end{equation*}
Letting $R\to+\infty$, by Lebesgue's dominated convergence theorem we obtain
$$
\int_{\eta}^1\frac1{\tau^2}\int_\eta^\tau l^2 f\left(\lim_{R\to+\infty}U(l,\eta,R)\right) \dif l \dif\tau=0.
$$
In view of (A1), we arrive at $\lim_{R\to+\infty}U(s,\eta,R)=0$, a.e. $s\in(\eta,1)$. The monotonicity of $U(s,\eta,R)$ with respect to $s$ further implies \eqref{eq(2.10)}.
The proof is complete.
\end{proof}

\begin{remark}
We may not be able to expect $\lim_{R\to+\infty}U(1,\eta,R)=0$. As a matter of fact, in the typical case where $f(u)=u$, we compute
$$
U(s,\eta,R)=\frac{C}{s}[\eta R\cosh((s-\eta)R)+\sinh((s-\eta)R)] \quad \text{for}~\eta\leq s\leq 1
$$
with
$$
C=\frac{\beta\bar\sigma}{\left(\eta R-\frac1R+\beta\right)\sinh((1-\eta)R)+(1-\eta+\beta\eta R)\cosh((1-\eta)R)},
$$
which satisfies
$$
\lim_{R\to+\infty}U(1,\eta,R)=\frac{\beta\bar\sigma}{\beta+1}>0.
$$
\end{remark}

Define
\begin{equation}
\label{eq(2.26)}
    F(\eta,R)=U(\eta,\eta,R)-\sigma_D \quad \text{for}~ 0 \leq\eta<1,~R>0.
\end{equation}
Then by virtue of (A3) and the assertion (iii) of Lemma \ref{lem-2.1}, for each $0\leq\eta<1$, there exists a unique $R=R(\eta)>0$ such that $F(\eta,R)=0$. In particular, we denote $R(0)$ by $R_c$. Furthermore, one obtains from \eqref{eq(2.00)} and \eqref{eq(2.21)} that
\begin{equation}
\label{eq(2.51)}
R'(\eta)=\frac{U_\eta(\eta,\eta,R(\eta))}{-U_R(\eta,\eta,R(\eta))}=\frac{R(\eta)\sigma_\rho(\eta R(\eta),\eta R(\eta),R(\eta))}{-U_R(\eta,\eta,R(\eta))}>0\quad\text{for~any}~0<\eta<1,
\end{equation}
which implies that $R(\eta)$ is strictly increasing on $[0,1)$. While the next lemma states that the equation $F(\eta,R)=0$ also uniquely determines a function $\eta$ of $R$, whose domain is $[R_c,+\infty)$.

\begin{lemma}
\label{lem-2.2}
Suppose (A1), (A3) are satisfied. Then the following assertions hold:

(i) For each $R\geq R_c$, there exists a unique $\eta=\eta(R)\in [0,1)$ such that $F(\eta,R)=0$.

(ii) The mapping $R\mapsto\eta(R)$ is strictly increasing, and
\begin{equation}
    \label{eq(2.27)}
    \lim_{R\to+\infty}\eta(R)=1.
\end{equation}

(iii) For any $0<R<R_c$ and $0 \leq \eta<1$, $F(\eta,R)>0$.
\end{lemma}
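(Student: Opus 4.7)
My plan is to derive parts (i) and (ii) by viewing the level set $\{F=0\}$ as the graph of the strictly increasing map $\eta\mapsto R(\eta)$ already constructed from \eqref{eq(2.51)}, and then inverting this map; part (iii) will follow from the same monotonicity together with a continuity argument.

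For part (i), fix $R\ge R_c$ and study $F(\cdot,R)\colon[0,1)\to\mathbb{R}$. Since $R\ge R_c=R(0)$ and $U(0,0,\cdot)$ is strictly decreasing in $R$ by Lemma~\ref{lem-2.1}(iii), we have $F(0,R)=U(0,0,R)-\sigma_D\le U(0,0,R_c)-\sigma_D=0$. For the opposite sign, I would set $s=\eta$ in \eqref{eq(2.24)} and, using $0<U<\bar\sigma$ together with the monotonicity of $f$, bound the boundary term by $\tfrac{R}{\beta}f(\bar\sigma)(1-\eta)$ (via $U_s(1,\eta,R)\le R^2 f(\bar\sigma)(1-\eta^3)/3$) and the double integral by a multiple of $R^2 f(\bar\sigma)(1-\eta)$; this yields $U(\eta,\eta,R)\to\bar\sigma$ as $\eta\to1^-$, hence $F(\eta,R)\to\bar\sigma-\sigma_D>0$. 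The intermediate value theorem then produces a root $\eta\in[0,1)$, and uniqueness is immediate: if $F(\eta_1,R)=F(\eta_2,R)=0$ with $\eta_1\ne\eta_2$, then $R(\eta_1)=R(\eta_2)=R$, contradicting $R'(\eta)>0$.

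For part (ii), $\eta(R)$ is by construction the inverse of the strictly increasing $R(\eta)$, so it is automatically strictly increasing on its natural domain, which is the range of $R(\eta)$. That range is $[R_c,\lim_{\eta\to1^-}R(\eta))$, so \eqref{eq(2.27)} reduces to the claim $R(\eta)\to+\infty$ as $\eta\to1^-$. I would prove this by contradiction: if $R(\eta_n)\le M$ along some $\eta_n\to1^-$, the uniform bound from the previous paragraph gives $\bar\sigma-U(\eta_n,\eta_n,R(\eta_n))\le C(M)f(\bar\sigma)(1-\eta_n)\to0$, incompatible with $U(\eta_n,\eta_n,R(\eta_n))=\sigma_D<\bar\sigma$. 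Finally, for part (iii), suppose $F(\eta,R)=0$ with $0<R<R_c$ and $0\le\eta<1$. Then $R$ must coincide with the unique root $R(\eta)\ge R(0)=R_c$, contradicting $R<R_c$. Hence $F(\eta,\cdot)$ does not vanish on $(0,R_c)$, and because $F(\eta,R)\to\bar\sigma-\sigma_D>0$ as $R\to0^+$ by \eqref{eq(2.9)}, continuity in $R$ forces $F(\eta,R)>0$ throughout $(0,R_c)$.

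The step I expect to be the main obstacle is the limit in \eqref{eq(2.27)}, because one needs the convergence $U(\eta,\eta,R)\to\bar\sigma$ to be uniform on bounded $R$-intervals, not merely pointwise in $R$; however, the explicit identity \eqref{eq(2.24)} at $s=\eta$ supplies exactly such a uniform modulus $C(M)(1-\eta)$ once \eqref{eq(2.0)}, \eqref{eq(2.7)} and assumption (A1) are used to control the boundary and the double-integral terms together.
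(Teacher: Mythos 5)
Your proposal is correct and follows essentially the same route as the paper: all three parts rest on the identity \eqref{eq(2.24)} evaluated at $s=\eta$ (the paper's \eqref{eq(2.01)}), the strict monotonicity of $U(\eta,\eta,\cdot)$ in $R$, and the strict increase of $R(\eta)$ coming from \eqref{eq(2.51)}. The only cosmetic difference is in proving \eqref{eq(2.27)}: the paper bounds the identity from below by $\frac{R}{\beta}f(\sigma_D)\int_{\eta(R)}^1 l^2\dif l\le\bar\sigma-\sigma_D$ and lets $R\to+\infty$ to force $\eta(R)\to1$ directly, whereas you bound it from above by $C(M)(1-\eta)$ and argue by contradiction that $R(\eta)\to+\infty$ as $\eta\to1^-$; the two arguments are equivalent in substance.
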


\begin{proof}
(i) We begin by showing the existence. Evidently, $F(0,R_c)=0$ and $F(0,R)<0$ for $R>R_c$. Since \eqref{eq(2.13)} and \eqref{eq(2.24)} imply
\begin{equation}
\label{eq(2.01)}
    U(\eta,\eta,R)=\bar{\sigma}-\frac{R}{\beta} \int_\eta^1 l^2f(U(l,\eta,R)) \dif l
-R^2\int_\eta^1\frac1{\tau^2}\int_\eta^\tau l^2f(U(l,\eta,R))\dif l\dif \tau \quad\text{for~all}~\eta\in[0,1),
\end{equation}
we see from \eqref{eq(2.0)} and (A3) that
\begin{equation*}
    \lim_{\eta\to1^-}F(\eta,R)=\bar{\sigma}-\sigma_D>0.
\end{equation*}
Thus, by the continuity of $F(\eta,R)$ we derive that for fixed $R>R_c$, there exists at least one $\eta\in(0,1)$ such that $F(\eta,R)=0$. Next, the uniqueness follows from $R(\eta)$ being strictly monotone.

(ii) In view of \eqref{eq(2.51)},
$\eta'(R)>0$ for all $R>R_c$; thus, $\eta(R)$ is strictly increasing on $[R_c,+\infty)$ and $\lim_{R\to+\infty}\eta(R)$ exists. By \eqref{eq(2.01)}, we have
\begin{equation}
\label{eq(2.52)}
\begin{aligned}
\bar\sigma-\sigma_D&=\frac{R}{\beta} \int_{\eta(R)}^1 l^2f(U(l,\eta(R),R)) \dif l
+R^2\int_{\eta(R)}^1\frac1{\tau^2}\int_{\eta(R)}^\tau l^2f(U(l,\eta(R),R))\dif l\dif \tau
\\
&\geq\frac{R}{\beta}f(\sigma_D) \int_{\eta(R)}^1 l^2\dif l.
\end{aligned}
\end{equation}
Hence, sending $R\to+\infty$ in \eqref{eq(2.52)} yields \eqref{eq(2.27)}.

(iii) Noticing that $R(\eta)\geq R_c$ for each $\eta\in[0,1)$ and
$U(\eta,\eta,R)$ is strictly decreasing in $R$, we immediately arrive at
$$
U(\eta,\eta,R)>U(\eta,\eta,R_c)\geq U(\eta,\eta,R(\eta))=\sigma_D\quad\text{for~all}~\eta\in[0,1)~\text{and}~ R\in(0,R_c),
$$
which proves the assertion (iii) and completes the proof of the lemma.
\end{proof}

Given $R>0$, by the uniqueness of solutions of the problem \eqref{eq-1}-\eqref{eq-3}, we see from Lemmas \ref{lem-2.1} and \ref{lem-2.2} that the solution of \eqref{eq-1}-\eqref{eq-3}, denoted by $\sigma(r,R)$, is as follows: if $0<R\leq R_c$, then
\begin{equation}
\label{eq(2.43)}
\sigma(r,R)=U\left(\frac{r}{R},0,R\right)\quad
\text{for}~0\leq r\leq R,
\end{equation}
and if $R>R_c$, then
\begin{equation}
    \label{eq(2.30)}
 \sigma(r,R) =
  \begin{cases}
  U\left( \frac{r}{R},\eta(R),R\right)\quad& \text{if}~\rho(R)<r\leq R, \\
  \sigma_D\quad &\text{if}~ 0\leq r\leq\rho(R),
  \end{cases}
\end{equation}
where $\rho(R)=\eta(R)R$.

\begin{remark}
\label{rem-2.2}
\eqref{eq(2.43)} and \eqref{eq(2.30)} imply that if the tumor radius $R\leq R_c$ ($R_c$ depends on $\bar\sigma$, $\sigma_D$ and $\beta$), then the tumor is nonnecrotic, and if $R>R_c$ then the tumor is necrotic.
\end{remark}

\begin{remark}
\label{rem-2.1}
It is shown in \citep[Lemma 2.1]{Z-C(18)} that the function $\sigma(r,R)$ given by \eqref{eq(2.43)} has the following properties:
\begin{equation}
\label{eq(2.47)}
0\leq\sigma_r(r,R)\leq f(\bar\sigma)\frac{r}3,\quad
-f(\bar\sigma)\left(\frac1\beta+\frac{R}3\right)\leq \sigma_R(r,R)\leq0
\end{equation}
for all $R>0$ and $0\leq r\leq R$, and
\begin{equation}
\label{eq(2.48)}
\frac1r\sigma_r(r,R)\leq \sigma_{rr}(r,R)\leq f(\bar\sigma)
\end{equation}
for all $R>0$ and $0<r\leq R$. In fact, one can further obtain $\sigma_r(r,R)>0$ for all $R>0$ and $r>0$, and if we denote $M=\sup_{\sigma\geq0}f'(\sigma)$, then a simple calculation based on the assumption (A1) yields
\begin{equation}
\label{eq(2.49)}
\sigma(r,R)\leq\sigma(R,R)e^{\frac{M}6(r^2-R^2)}
\end{equation}
for all $R>0$ and $r>R$. Similarly, for the function $\sigma(r,R)$ given by \eqref{eq(2.30)} we also have \eqref{eq(2.47)} and \eqref{eq(2.48)} for all $R>R_c$ and $\rho(R)\leq r\leq R$,  \eqref{eq(2.49)} for all $R>R_c$ and $r>R$.
\end{remark}

Substituting \eqref{eq(2.43)} or \eqref{eq(2.30)} into \eqref{eq-4}, we obtain the equation for the dormant tumor radius $R_s$:
\begin{equation}
\label{eq(2.39)}
G(R)=0,
\end{equation}
where
\begin{equation}
    \label{eq(2.31)}
    G(R)=
        \begin{cases}
        \int_0^1g(U(s,0,R))s^2\dif s\quad\text{if}~0<R\leq R_c,\\
        \int_{\eta(R)}^1g(U(s,\eta(R),R))s^2\dif s-\frac{\nu}{3}\eta^3(R) \quad\text{if}~R>R_c.
        \end{cases}
\end{equation}

\begin{lemma}
\label{lem-2.3}
Assume that (A1)-(A3) hold. Then the function $G(R)$ defined by \eqref{eq(2.31)} possesses the following properties:

(i) $G(R)$ is continuous and strictly decreasing on $(0,+\infty)$.

(ii)
\begin{equation*}
    \lim_{R\to0^+}G(R)=\frac{g(\bar\sigma)}{3},\quad
    \lim_{R\to+\infty}G(R)=-\frac{\nu}{3}.
\end{equation*}

(iii) If $R_c$ is regarded as a function of $\bar\sigma$, then there exists a number $\sigma^*>\tilde\sigma$ such that
\begin{equation}
    \label{eq(2.41)}
    G(R_c(\bar\sigma))
          \begin{cases}
          >0\quad\text{if}~\bar\sigma>\sigma^*,\\
          =0\quad\text{if}~\bar\sigma=\sigma^*,\\
          <0\quad\text{if}~\sigma_D<\bar\sigma<\sigma^*.
          \end{cases}
\end{equation}
\end{lemma}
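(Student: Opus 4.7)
My plan is to handle the three claims in turn, with Part (i) for the necrotic regime being the main technical obstacle.

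For Part (i), continuity of $G$ on each of $(0,R_c]$ and $(R_c,+\infty)$ follows from standard continuous dependence of ODE solutions on parameters together with continuity of $\eta(R)$ (via the implicit function theorem applied to $F(\eta,R)=0$, using $U_\eta(\eta,\eta,R)>0$ from \eqref{eq(2.21)}); the two formulas agree at $R=R_c$ since $\eta(R_c)=0$. For strict decrease, I would first observe that (A2) actually forces $g$ to be strictly increasing on $[0,+\infty)$: equality $g(a)=g(b)$ for $a<b$ together with $g'\ge 0$ would force $g'\equiv 0$ on $[a,b]$, contradicting (A2). In the nonnecrotic regime, Lemma~\ref{lem-2.1}(iii) combined with the strict monotonicity of $g$ immediately gives $G(R_1)>G(R_2)$ for $0<R_1<R_2\le R_c$ by integration. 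In the necrotic regime $R_c<R_1<R_2$, I would differentiate $G$: the boundary contribution from the lower endpoint is $-(g(\sigma_D)+\nu)\eta^2(R)\eta'(R)\le 0$ by (A3) and $\eta'(R)>0$, while the Leibniz interior term is $\int_{\eta(R)}^1 g'(U)V(s)s^2\,ds$ with $V(s):=\tfrac{d}{dR}U(s,\eta(R),R)$. A BVP for $V$ obtained by differentiating \eqref{eq(2.5)}--\eqref{eq(2.6)} along the curve $\eta=\eta(R)$ (in the spirit of \eqref{eq(2.17)}--\eqref{eq(2.22)}), combined with the strong maximum principle as in the proof of \eqref{eq(2.00)}, should deliver $V\le 0$ and hence $G'(R)<0$; verifying this sign is the step I anticipate as the main obstacle.

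For Part (ii), when $R\to 0^+$ we may take $R\le R_c$ and use $G(R)=\int_0^1 g(U(s,0,R))s^2\,ds$; \eqref{eq(2.9)} together with continuity of $g$ forces $G(R)\to g(\bar\sigma)/3$. When $R\to+\infty$, Lemma~\ref{lem-2.2}(ii) gives $\eta(R)\to 1$, the integral part of $G(R)$ is dominated by $g(\bar\sigma)(1-\eta(R))/3\to 0$, and $-\nu\eta^3(R)/3\to -\nu/3$.

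For Part (iii), the key idea is that on the curve $R=R_c(\bar\sigma)$ (where $\eta=0$), the inner profile of $U$ is universal. Let $\sigma_\infty(\ell)$ solve $\sigma''+(2/\ell)\sigma'=f(\sigma)$ on $[0,+\infty)$ with $\sigma_\infty(0)=\sigma_D$, $\sigma_\infty'(0)=0$. The rescaling $\ell=sR_c(\bar\sigma)$ shows $U(s,0,R_c(\bar\sigma))=\sigma_\infty(sR_c(\bar\sigma))$ by uniqueness for the IVP, so
\begin{equation*}
H(\bar\sigma):=G(R_c(\bar\sigma))=\Psi(R_c(\bar\sigma)),\qquad \Psi(R):=\frac{1}{R^3}\int_0^R g(\sigma_\infty(\ell))\ell^2\,d\ell.
\end{equation*}
Rewriting the Robin condition at $r=R_c$ yields $\bar\sigma=\sigma_\infty(R_c)+\sigma_\infty'(R_c)/\beta$, and since both $\sigma_\infty$ and $\sigma_\infty'$ are strictly increasing on $(0,+\infty)$, $R_c(\bar\sigma)$ is continuous and strictly increasing in $\bar\sigma$. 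A direct calculation gives
\begin{equation*}
\Psi'(R)=\frac{3}{R^4}\int_0^R[g(\sigma_\infty(R))-g(\sigma_\infty(\ell))]\ell^2\,d\ell>0,
\end{equation*}
using strict monotonicity of both $\sigma_\infty$ (note $\sigma_\infty'(\ell)\ge f(\sigma_D)\ell/3$, which also gives $\sigma_\infty(\ell)\to+\infty$) and $g$. Hence $H$ is continuous and strictly increasing. Now $g(\tilde\sigma)=0$ together with Part (ii) gives $\lim_{R\to 0^+}G(R)=0$, so by Part (i) $H(\tilde\sigma)=G(R_c(\tilde\sigma))<0$; and as $\bar\sigma\to+\infty$, $R_c\to+\infty$ with $g(\sigma_\infty(\ell))\ge c>0$ for $\ell$ large, whence $\Psi(R_c)\ge c/3+o(1)>0$, giving $H(\bar\sigma)>0$ for large $\bar\sigma$. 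The intermediate value theorem combined with strict monotonicity of $H$ then produces a unique $\sigma^*>\tilde\sigma$ with $H(\sigma^*)=0$ and yields the sign structure \eqref{eq(2.41)}.
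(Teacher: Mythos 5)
Your proposal is correct and follows the same overall skeleton as the paper (monotonicity plus the two limits for (i)--(ii), then monotonicity of $\bar\sigma\mapsto G(R_c(\bar\sigma))$ plus endpoint signs and the intermediate value theorem for (iii)), but several key steps are executed by genuinely different means. For the nonnecrotic part of (i) the paper differentiates, $G'(R)=\int_0^1 g'(U)\,\partial U/\partial R\,s^2\,\dif s<0$ via \eqref{eq(2.00)}, whereas you deduce strict monotonicity of $g$ from (A2) and compare $G(R_1)>G(R_2)$ pointwise using Lemma~\ref{lem-2.1}(iii); your route is more elementary and avoids justifying differentiation under the integral. For the necrotic part both you and the paper reduce matters to the sign $\partial V/\partial R<0$ on $(\eta(R),1]$ and leave the maximum-principle verification at the level of ``argue as for \eqref{eq(2.00)}'', so you are no less complete than the source there. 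Part (ii) is identical, resting on \eqref{eq(2.9)} and \eqref{eq(2.27)}. In part (iii) both arguments hinge on the same key observation that the inner profile along the curve $\eta=0$, $R=R_c(\bar\sigma)$ is universal (the paper expresses this as $\partial\sigma/\partial\bar\sigma\equiv0$); but you exploit it more explicitly by introducing the fixed profile $\sigma_\infty$ and writing $G(R_c(\bar\sigma))=\Psi(R_c(\bar\sigma))$, which lets you get $R_c'(\bar\sigma)>0$ directly from the Robin relation $\bar\sigma=\sigma_\infty(R_c)+\sigma_\infty'(R_c)/\beta$ and the sign of $\Psi'$ from an explicit one-line computation, and lets you replace the paper's appeal to Levi's monotone convergence theorem for the limit $\bar\sigma\to+\infty$ by a simple lower bound $\Psi(R)\geq c/3+o(1)$. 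Your endpoint value $G(R_c(\tilde\sigma))<0$ via parts (i)--(ii) is also valid, though the paper's direct observation $W(s,\tilde\sigma)<\tilde\sigma\Rightarrow g(W)<0$ is shorter. Both approaches prove the lemma; yours buys more explicit, computation-based monotonicity arguments, while the paper's is more uniform in that it treats every monotonicity claim by the same linearization-plus-maximum-principle template.
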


\begin{proof}
It is easy to see that $G(R)$ is continuous on $(0,+\infty)$. By \eqref{eq(2.00)} and the assumption (A2), we find for $0<R<R_c$,
\begin{equation*}
    G'(R)=\int_0^1 g'(U(s,0,R))\frac{\partial U}{\partial R}(s,0,R)s^2\dif s<0.
\end{equation*}
When $R>R_c$, we write $U(s,\eta(R),R)$ as $V(s,R)$, or equivalently, $V(s,R)=\sigma(r,R)$ for $R>R_c$ and $\eta(R)\leq s\leq 1$, where $\sigma(r,R)$ is given by \eqref{eq(2.30)}. Then an argument similar to that used in obtaining \eqref{eq(2.00)} yields
\begin{equation}
\label{eq(2.50)}
    \frac{\partial V}{\partial R}(\eta(R),R)=0,\quad \frac{\partial V}{\partial R}(s,R)<0 \quad \text{for}~\eta(R)<s\leq1.
\end{equation}
Consequently, a combination of \eqref{eq(2.50)}, the assumptions (A2) and (A3) leads to
\begin{equation*}
    G'(R)=\int_{\eta(R)}^1 g'(V(s,R)) \frac{\partial V}{\partial R}(s,R) s^2 \dif s-[g(\sigma_D)+\nu]\eta^2(R) \eta'(R)<0 \quad \text{for}~R>R_c.
\end{equation*}
The assertion (i) is thus proved.
The assertion (ii) immediately follows from \eqref{eq(2.9)} and \eqref{eq(2.27)}.

We now proceed with the proof of the assertion (iii). Let $R_c=R_c(\bar\sigma)$ for $\bar\sigma>\sigma_D$ and write
\begin{equation*}
    W(s,\bar\sigma)=U(s,0,R_c(\bar{\sigma})), \quad
    \mathcal{G}(\bar\sigma)=G(R_c(\bar\sigma))=\int_0^1g(W(s,\bar\sigma))s^2 \dif s.
\end{equation*}
Then there holds
\begin{equation}
    \label{eq(2.33)}
 R'_c(\bar\sigma)>0 \quad \text{for}~\bar{\sigma}>\sigma_D.
\end{equation}
In fact, setting $\sigma(r,\bar\sigma)=W(s,\bar\sigma)$ with $r=sR_c(\bar\sigma)$, we deduce from \eqref{eq(2.03)}-\eqref{eq(2.05)} that
\begin{equation}
\label{eq(2.34)}
\begin{cases}
\frac{\partial^2\sigma}{\partial r^2}(r,\bar\sigma)+\frac2r\frac{\partial\sigma}{\partial r}(r,\bar\sigma)=f(\sigma(r,\bar\sigma))\quad\text{for}~0<r<R_c(\bar\sigma),\\
\frac{\partial\sigma}{\partial r}(0,\bar\sigma)=0,\quad \sigma(0,\bar\sigma)=\sigma_D,\\
\frac{\partial\sigma}{\partial r}(R_c(\bar\sigma),\bar\sigma)+\beta[\sigma(R_c(\bar\sigma),\bar\sigma)-\bar\sigma]=0.
\end{cases}
\end{equation}
Differentiating \eqref{eq(2.34)} with respect to $\bar\sigma$ and by the uniqueness of solutions to the initial value problem, we arrive at
\begin{equation*}
\frac{\partial\sigma}{\partial\bar\sigma}(r,\bar\sigma)\equiv0\quad\text{for~every}~0\leq r\leq R_c(\bar\sigma)
\end{equation*}
and
\begin{equation*}
 R'_c(\bar\sigma)=\frac{\beta}{\frac{\partial^2\sigma}{\partial r^2}(R_c(\bar\sigma),\bar\sigma)
+\beta\frac{\partial\sigma}{\partial r}(R_c(\bar\sigma),\bar\sigma)}>0,
\end{equation*}
which proves \eqref{eq(2.33)}.
Furthermore,
\begin{equation}
\label{eq(2.07)}
\frac{\partial W}{\partial\bar\sigma}(s,\bar\sigma)
=s R'_c(\bar\sigma)\frac{\partial\sigma}{\partial r}(r,\bar\sigma)+\frac{\partial\sigma}{\partial\bar\sigma}(r,\bar\sigma)
=s R'_c(\bar\sigma)\frac{\partial\sigma}{\partial r}(r,\bar\sigma)>0
\end{equation}
for $0<s\leq1$, which implies
\begin{equation}
\label{eq(2.35)}
    \mathcal{G}'(\bar\sigma)=\int_0^1 g'(W(s,\bar\sigma))\frac{\partial W}{\partial\bar\sigma}(s,\bar\sigma) s^2 \dif s>0\quad\text{for}~\bar\sigma>\sigma_D.
\end{equation}
From \eqref{eq(2.0)} and (A2) we see $\mathcal{G}(\tilde\sigma)<0$. On the other hand, since
$$
 \frac{\partial W}{\partial s}(s,\bar\sigma)=\frac1{s^2}\int_0^sR_c^2(\bar\sigma)l^2f(W(l,\bar\sigma))\dif l\quad\text{for}~0<s\leq1,
$$
recalling that $M=\sup_{\sigma\geq0}f'(\sigma)$, using (A1) we derive
\begin{equation}
\label{eq(2.37)}
\frac13R_c^2(\bar\sigma)f(\sigma_D)s\leq
\frac{\partial W}{\partial s}(s,\bar\sigma)
\leq\frac{M}3R_c^2(\bar\sigma)sW(s,\bar\sigma)\quad\text{for}~0\leq s\leq1.
\end{equation}
As a result,
\begin{equation}
\label{eq(2.38)}
\sigma_D+\frac16R_c^2(\bar\sigma)f(\sigma_D)s^2\leq
    W(s,\bar\sigma)\leq\sigma_De^{\frac{M}6R_c^2(\bar\sigma)s^2}\quad\text{for~all}~0\leq s\leq1.
\end{equation}
Making use of \eqref{eq(2.37)}, \eqref{eq(2.38)} and the relation
\begin{equation*}
\beta\bar\sigma=\frac1{R_c(\bar\sigma)}\frac{\partial W}{\partial s}(1,\bar\sigma)+\beta W(1,\bar\sigma),
\end{equation*}
we obtain
\begin{equation*}
\lim_{\bar\sigma\to\sigma_D^+}R_c(\bar\sigma)=0,\quad\lim_{\bar\sigma\to+\infty} R_c(\bar\sigma)=+\infty,
\end{equation*}
which in turn leads to
\begin{equation}
\label{eq(2.40)}
\lim_{\bar\sigma\to+\infty}W(s,\bar\sigma)=+\infty\quad\text{for}~0<s\leq1.
\end{equation}
Based on (A2) and \eqref{eq(2.07)}, Levi's theorem gives
\begin{equation*}
\lim_{\bar\sigma\to+\infty}\int^1_0[g(W(s,\bar\sigma))-g(\sigma_D)]s^2\dif s=
\int^1_0\lim_{\bar\sigma\to+\infty}[g(W(s,\bar\sigma))-g(\sigma_D)]s^2\dif s.
\end{equation*}
Hence, it follows from \eqref{eq(2.40)} and (A2) that 
\begin{equation*}
\lim_{\bar\sigma\to+\infty}\mathcal{G}(\bar\sigma)=
\int^1_0\lim_{\bar\sigma\to+\infty}g(W(s,\bar\sigma))s^2\dif s>0.
\end{equation*}
We therefore conclude the assertion (iii) by \eqref{eq(2.35)} and complete the proof of the lemma.
\end{proof}

Now we are ready to state the main result of this section.

\begin{theorem}
\label{thm-1}
Let (A1)-(A3) hold. Then the system \eqref{eq(1.1)}-\eqref{eq(1.5)} admits a
unique stationary solution $(\sigma_s(r), R_s)$ if and only if $\bar\sigma>\tilde\sigma$. If $\bar\sigma>\tilde\sigma$, then $R_s$ is the unique positive root of the equation \eqref{eq(2.39)}, and $\sigma_s(r)=\sigma(r,R_s)$ defined by \eqref{eq(2.43)} or \eqref{eq(2.30)}. Furthermore, there exists a positive number $\sigma^*>\tilde\sigma$ such that if $\bar\sigma>\sigma^*$, then $R_s>R_c$, $\sigma_s(r)$ is given by \eqref{eq(2.30)}, and accordingly, the dormant tumor has a necrotic core with radius $\rho(R_s)$, while if $\tilde\sigma<\bar\sigma\leq\sigma^*$, then $R_s\leq R_c$, $\sigma_s(r)$ is given by \eqref{eq(2.43)}, and the dormant tumor does not have a necrotic core.
\end{theorem}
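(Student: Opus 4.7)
The plan is to read Theorem \ref{thm-1} off of Lemma \ref{lem-2.3} almost directly. Since any stationary solution $(\sigma_s,R_s)$ satisfies \eqref{eq-1}--\eqref{eq-3}, the function $\sigma_s$ is uniquely determined by $R_s$ via formula \eqref{eq(2.43)} (if $R_s\le R_c$) or \eqref{eq(2.30)} (if $R_s>R_c$), as established by Lemmas \ref{lem-2.1} and \ref{lem-2.2}. Consequently, the existence and uniqueness of a stationary solution is equivalent to the existence and uniqueness of a positive root of $G(R)=0$, where $G$ is defined in \eqref{eq(2.31)}. All the properties needed for this reduction are packaged in Lemma \ref{lem-2.3}.

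For the first assertion I would argue as follows. Lemma \ref{lem-2.3}(i)--(ii) show that $G$ is continuous and strictly decreasing on $(0,+\infty)$ with $\lim_{R\to+\infty}G(R)=-\nu/3<0$ and $\lim_{R\to 0^+}G(R)=g(\bar\sigma)/3$. Under assumption (A2), $g$ is nondecreasing, vanishes at $\tilde\sigma$, and $g'$ is not identically zero on any interval, so $g(\bar\sigma)>0$ iff $\bar\sigma>\tilde\sigma$, and $g(\bar\sigma)\le 0$ iff $\bar\sigma\le\tilde\sigma$. In the latter case strict monotonicity of $G$ gives $G(R)<g(\bar\sigma)/3\le 0$ for all $R>0$, so no stationary solution exists. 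In the former case $\lim_{R\to 0^+}G(R)>0>\lim_{R\to+\infty}G(R)$, whence the intermediate value theorem combined with strict monotonicity yields a unique $R_s>0$ with $G(R_s)=0$, and $\sigma_s$ is then read off from \eqref{eq(2.43)} or \eqref{eq(2.30)}.

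For the classification by $\sigma^*$ I would appeal to Lemma \ref{lem-2.3}(iii) together with the strict monotonicity of $G$. If $\bar\sigma>\sigma^*$, then $G(R_c)>0=G(R_s)$ forces $R_s>R_c$, so $\sigma_s$ is given by \eqref{eq(2.30)} with necrotic core radius $\rho(R_s)=\eta(R_s)R_s>0$. If $\bar\sigma=\sigma^*$, then $G(R_c)=0$, so $R_s=R_c$ by uniqueness, which by the convention of Remark \ref{rem-2.2} still corresponds to a nonnecrotic tumor, and $\sigma_s$ is given by \eqref{eq(2.43)}. If $\tilde\sigma<\bar\sigma<\sigma^*$, then $G(R_c)<0=G(R_s)$ forces $R_s<R_c$, so $\sigma_s$ comes from \eqref{eq(2.43)} and the tumor is nonnecrotic. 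The relation $\sigma^*>\tilde\sigma$ needed to make the two subcases consistent is already part of Lemma \ref{lem-2.3}(iii).

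The hard work has already been done in establishing Lemma \ref{lem-2.3}; no real obstacle remains here. The only mild subtleties are the limiting cases: verifying that $\bar\sigma=\tilde\sigma$ produces no root (handled by strict monotonicity together with $G(R)<g(\tilde\sigma)/3=0$), and placing $\bar\sigma=\sigma^*$ on the nonnecrotic side (which is a definitional matter, since $R_s=R_c$ means $\eta(R_s)=0$ and formula \eqref{eq(2.43)} applies).
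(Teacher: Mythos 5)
Your proposal is correct and follows exactly the route the paper intends: the paper states Theorem \ref{thm-1} without a separate proof precisely because, after reducing the stationary problem to the scalar equation $G(R)=0$ via \eqref{eq(2.43)}, \eqref{eq(2.30)} and \eqref{eq(2.39)}, everything follows from the monotonicity, limits, and sign information in Lemma \ref{lem-2.3} by the intermediate value theorem, just as you argue. Your treatment of the borderline cases $\bar\sigma=\tilde\sigma$ (no root, by strict monotonicity) and $\bar\sigma=\sigma^*$ (so $R_s=R_c$, hence nonnecrotic by the convention of Remark \ref{rem-2.2}) is also consistent with the theorem's statement.
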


\section{Transient solutions}

In this section, we study transient solutions of the problem \eqref{eq(1.1)}-\eqref{eq(1.5)}. The first result concerns the global existence and asymptotic behavior of transient solutions.

\begin{theorem}
\label{thm-2}
Let the assumptions (A1)-(A3) be satisfied. Then for any $R_0>0$, the problem \eqref{eq(1.1)}-\eqref{eq(1.5)} has a unique solution $(\sigma(r,t),R(t))$ ($R(t)>0$, $0\leq r\leq R(t)$) for all $t>0$. Moreover, if $\bar\sigma\leq\tilde\sigma$, then $\lim_{t\to+\infty}R(t)=0$,
while if $\bar\sigma>\tilde\sigma$, then $\lim_{t\to+\infty}R(t)=R_s$ and $\lim_{t\to+\infty}\max_{0\leq r\leq R(t)}|\sigma(r,t)-\sigma_s(r)|=0$.
\end{theorem}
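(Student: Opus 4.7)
My plan is to exploit the quasi-steady structure of the nutrient equations \eqref{eq(1.1)}--\eqref{eq(1.3)} (they contain no time derivative) to reduce the full free boundary problem to a scalar ODE for $R(t)$, and then read off global existence and the two long-time alternatives from the sign and monotonicity of $G$ established in Lemma \ref{lem-2.3}.

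For each $t>0$, the spatial system \eqref{eq(1.1)}--\eqref{eq(1.3)} with $R=R(t)$ coincides with \eqref{eq-1}--\eqref{eq-3}, so Lemmas \ref{lem-2.1}--\ref{lem-2.2} produce a unique $\sigma(r,t)=\sigma(r,R(t))$, given by \eqref{eq(2.43)} if $R(t)\le R_c$ and by \eqref{eq(2.30)} if $R(t)>R_c$. Rescaling $r=sR(t)$ in the two integrals on the right of \eqref{eq(1.4)} exactly reproduces the definition \eqref{eq(2.31)}, so \eqref{eq(1.4)}--\eqref{eq(1.5)} reduce to
\begin{equation*}
\frac{\dif R}{\dif t}=R(t)\,G(R(t)),\qquad R(0)=R_0.
\end{equation*}
By Lemma \ref{lem-2.3}, the right-hand side is continuous on $(0,\infty)$, bounded on bounded subsets, and strictly decreasing in $R$; a standard Picard argument (using the smooth dependence of $U$, $\eta(R)$, and $g$ away from the transition $R=R_c$) gives local existence and uniqueness of a positive solution. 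Global existence then follows from the upper bound produced by $G(+\infty)=-\nu/3<0$ (so $RG(R)$ is strongly negative for large $R$) and from the differential inequality $R'(t)\ge -KR(t)$, which holds on every bounded $R$-interval and yields $R(t)\ge R_0 e^{-Kt}>0$.

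I then split the long-time analysis on the sign of $g(\bar\sigma)$. When $\bar\sigma\le\tilde\sigma$, $g(\bar\sigma)\le 0$, so $\lim_{R\to 0^+}G(R)=g(\bar\sigma)/3\le 0$ and strict monotonicity forces $G(R)<0$ for all $R>0$. Hence $R(t)$ is strictly decreasing to some $R_\infty\ge 0$; if $R_\infty>0$ then eventually $R'(t)\le R_\infty G(R_\infty)<0$, contradicting the existence of a finite limit, so $R_\infty=0$. When $\bar\sigma>\tilde\sigma$, $G(0^+)>0>G(+\infty)$, so Lemma \ref{lem-2.3} yields a unique zero $R_s$, which is exactly the dormant radius of Theorem \ref{thm-1}. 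The sign of $RG(R)$ on each side of $R_s$ traps $R(t)$ on the same side as $R_0$ and forces monotonicity, and the same limit argument gives $R(t)\to R_s$. The uniform convergence $\max_{0\le r\le R(t)}|\sigma(r,t)-\sigma_s(r)|\to 0$ then follows from $\sigma(r,t)=\sigma(r,R(t))$ together with the Lipschitz-in-$R$ bound on $\sigma_R$ in \eqref{eq(2.47)} of Remark \ref{rem-2.1}, with the extension bound \eqref{eq(2.49)} used to make sense of $\sigma_s(r)$ on the moving domain in the case $R(t)>R_s$.

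The main technical obstacle I anticipate is uniqueness of the ODE across the non-necrotic/necrotic transition $R=R_c$: the two branches of $G$ match continuously since $\eta(R_c)=0$, but one-sided derivatives involve $\eta'(R_c)$ and need not agree, so the usual Picard argument does not apply directly at $R_c$. I expect to bypass this using the strict monotonicity of $G$ to invert the ODE: away from the unique equilibrium (which exists only when $\bar\sigma>\tilde\sigma$, at $R=R_s$) the map $t=\int_{R_0}^{R(t)}\dif\rho/[\rho G(\rho)]$ is well defined and strictly monotone, which pins down $R(t)$ uniquely, and the equilibrium trivially admits only the constant solution.
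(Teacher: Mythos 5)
Your proposal is correct and follows essentially the same route as the paper: reduce the quasi-steady problem to the scalar ODE $R'=R\,G(R)$ via the rescaling $r=sR$, and then read off global existence, the vanishing alternative for $\bar\sigma\le\tilde\sigma$, and convergence to $R_s$ for $\bar\sigma>\tilde\sigma$ from the continuity, strict monotonicity and limits of $G$ in Lemma \ref{lem-2.3}, with the uniform convergence of $\sigma$ obtained from the bounds in Remark \ref{rem-2.1}. Your extra care about uniqueness of the ODE across the corner $R=R_c$ (where $G$ is continuous but possibly not differentiable) addresses a point the paper passes over silently, and the monotonicity-based uniqueness argument you sketch does close it.
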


\begin{proof}
Given $R(t)>0$, we know from Section 2 that \eqref{eq(1.1)}-\eqref{eq(1.3)} admits a unique solution $\sigma(r,R(t))$ (see \eqref{eq(2.43)}, \eqref{eq(2.30)}). Then by \eqref{eq(1.4)} and \eqref{eq(1.5)}, $R(t)$ can be determined by solving the following initial value problem for a first order ordinary differential equation
\begin{equation}
\label{eq(3.1)}
\begin{cases}
R'(t)=R(t)G(R(t))\quad\text{for}~t>0,\\
R(0)=R_0,
\end{cases}
\end{equation}
where the function $G$ is given by \eqref{eq(2.31)}.
The assertions (i), (ii) of Lemma \ref{lem-2.3} imply that for any $R_0>0$, the problem \eqref{eq(3.1)} has a unique solution $R(t)$ satisfying
\begin{equation}
\label{eq(3.2)}
R_0 e^{-\frac{\nu}3t} \leq R(t) \leq R_0 e^{\frac{g(\bar\sigma)}3t} \quad \text{for~every}~t\geq0.
\end{equation}
Accordingly, \eqref{eq(1.1)}-\eqref{eq(1.5)} has a unique solution $(\sigma(r,t),R(t))=(\sigma(r,R(t)),R(t))$ for all $t\geq0$.

If $\bar\sigma<\tilde\sigma$, then $g(\bar\sigma)<0$ and $\lim_{t\to+\infty}R(t)=0$ immediately follows from \eqref{eq(3.2)}.
If $\bar\sigma=\tilde\sigma$, then $G<0$ on $(0,+\infty)$ by the assertions (i), (ii) of Lemma \ref{lem-2.3}, and thus $R'(t)<0$ for $t>0$. We argue by contradiction and suppose that
$\lim_{t\to+\infty}R(t)=L$ for some $L\in(0,R_0)$. Then $R(t)\leq R_0 e^{G(L)t}$ for $t\geq0$, reaching a contradiction. Finally, if $\bar\sigma>\tilde\sigma$,
then $G(R_s)=0$. Using this and the fact that $G(R)$ is strictly decreasing on $(0,+\infty)$, we can similarly obtain $\lim_{t\to+\infty}R(t)=R_s$, which together with Remark \ref{rem-2.1} gives $\lim_{t\to+\infty}\max_{0\leq r\leq R(t)}|\sigma(r,t)-\sigma_s(r)|=0$.
The proof is complete.
\end{proof}

The next result suggests that mutual transition may exist between the nonnecrotic and necrotic phases in the growth of tumors.

\begin{theorem}
\label{thm-3}
Assume that (A1)-(A3) are fulfilled. Then the following conclusions hold.

(i) Let $\tilde\sigma<\bar\sigma<\sigma^*$. If $R_0\leq R_c$, then the tumor is in the nonnecrotic phase for all $t\geq0$, while if $R_0>R_c$, then there exists a finite number $T>0$ such that the tumor is in the necrotic phase for $0\leq t<T$, and in the nonnecrotic phase for $t\geq T$.

(ii) Let $\bar\sigma>\sigma^*$. If $R_0<R_c$, then there exists a finite number $T>0$ such that the tumor is in the nonnecrotic phase for $0\leq t\leq T$, and in the necrotic phase for $t>T$, while if $R_0\geq R_c$, then the tumor is in the necrotic phase for all $t>0$.
\end{theorem}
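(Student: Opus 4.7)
The plan is to read off the phase of the tumor from the size of $R(t)$ relative to $R_c$, invoking Remark \ref{rem-2.2}: the tumor is nonnecrotic precisely when $R(t)\leq R_c$ and necrotic when $R(t)>R_c$. Since $R(t)$ solves the autonomous ODE $R'=RG(R)$ with $G$ strictly decreasing (Lemma \ref{lem-2.3}(i)) and $G(R_s)=0$ by definition of the dormant radius, the whole argument reduces to locating $R_c$ relative to $R_s$ and tracking on which side of $R_s$ the trajectory $R(t)$ lives. The crucial input is Lemma \ref{lem-2.3}(iii): when $\tilde\sigma<\bar\sigma<\sigma^*$ one has $G(R_c)<0$, which by monotonicity of $G$ forces $R_s<R_c$, whereas when $\bar\sigma>\sigma^*$ one has $G(R_c)>0$, which forces $R_s>R_c$.

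For part (i), where $R_s<R_c$, if $R_0\leq R_c$ I would split into the subcases $R_0\leq R_s$ and $R_s<R_0\leq R_c$. In the first, $R$ is monotonically non-decreasing and bounded above by $R_s<R_c$; in the second, $R$ is strictly decreasing toward $R_s$. Either way $R(t)\leq R_c$ for all $t$, keeping the tumor nonnecrotic. If instead $R_0>R_c$, then as long as $R(t)\geq R_c$ the monotonicity of $G$ gives $G(R(t))\leq G(R_c)<0$, whence $R'(t)\leq G(R_c)R(t)$ and $R(t)\leq R_0 e^{G(R_c) t}$. This estimate guarantees that $R$ hits $R_c$ at a finite first time $T\leq\ln(R_c/R_0)/G(R_c)$. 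At that instant $R'(T)=R_c G(R_c)<0$, so $R$ drops strictly below $R_c$ and must remain there, since $R_s<R_c$ is the global attractor and the flow is monotone on each side of $R_s$.

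Part (ii) is the mirror image using $R_s>R_c$ and $G(R_c)>0$. For $R_0<R_c$, the lower estimate $R(t)\geq R_0 e^{G(R_c) t}$, valid while $R(t)\leq R_c$, shows that $R$ reaches $R_c$ in finite time $T$; before this $R\leq R_c$ (nonnecrotic), and afterwards $R$ continues to rise toward $R_s>R_c$, placing the tumor in the necrotic phase. For $R_0\geq R_c$, if $R_0>R_c$ the monotonicity of $R(t)$ toward $R_s$ keeps $R(t)>R_c$ for all $t$, while if $R_0=R_c$ the strict inequality $R'(0)=R_cG(R_c)>0$ pushes $R$ above $R_c$ for every $t>0$.

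The main subtlety, though not a genuine obstacle once Lemma \ref{lem-2.3} is in hand, is ensuring that the crossing time $T$ is truly finite and that the trajectory cannot oscillate back across $R_c$. Finiteness follows from the exponential bounds above, which depend only on the sign and magnitude of $G(R_c)$; the no-oscillation conclusion is immediate from the strict monotonicity of $G$, which makes $R_s$ a globally attracting equilibrium with monotone convergence on either side.
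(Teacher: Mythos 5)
Your proposal is correct and follows exactly the route the paper indicates (and then omits): identify the phase via Remark \ref{rem-2.2}, locate $R_c$ relative to $R_s$ using the sign of $G(R_c)$ from \eqref{eq(2.41)} together with $G(R_s)=0$ and the strict monotonicity of $G$ from Lemma \ref{lem-2.3}, and then run a phase-line analysis of $R'=RG(R)$ with the exponential comparison bounds supplying finiteness of the crossing time $T$. You have in fact supplied the details the paper declares "omitted for brevity," and they check out, including the boundary cases $R(T)=R_c$ and $R_0=R_c$.
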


The proof of Theorem \ref{thm-3} relies essentially on the following four facts: an evolutionary tumor is in the nonnecrotic phase if its radius $R(t)\leq R_c$ and in the necrotic phase if $R(t)>R_c$, ensured by Remark \ref{rem-2.2};  an evolutionary tumor converges to the dormant tumor when $\bar\sigma>\tilde\sigma$ ensured by Theorem \ref{thm-2}; the dormant tumor is nonnecrotic and its radius $R_s<R_c$ if $\tilde\sigma<\bar\sigma<\sigma^*$, whereas the dormant tumor is necrotic and $R_s>R_c$ if $\bar\sigma>\sigma^*$, ensured by Theorem \ref{thm-1}; the function $G$ in the evolution equation \eqref{eq(3.1)} is strictly decreasing on $(0,+\infty)$ ensured by Lemma \ref{lem-2.3}. The details are omitted for brevity.

\begin{remark}
\label{rem-3.1}
Theorem \ref{thm-3} can be interpreted in the biological context as follows. If the dormant tumor is nonnecrotic, then an initially small tumor will always be nonnecrotic and an initially large tumor will be nonnecrotic from a finite time; if the dormant tumor has a necrotic core, then an initially small tumor will form a necrotic core at a finite time, and an initially large tumor will always have a necrotic core.
\end{remark}

\section*{Acknowledgments}
This work was partly supported by the National Natural Science Foundation of China (No. 11601200, No. 11861038
and No. 11771156), and Graduate Innovation Fund of Jiangxi Normal University.

\end{document}